\tikzset{VertexStyle/.style = {shape = circle,
	ball color     = black,
	text           = black,
	inner sep      = 2pt,
	outer sep      = 0pt,
	minimum size   = 5 pt
}}
\theoremstyle{plain}
\newtheorem{lemma}{Lemma}[section]
\newtheorem{prop}[lemma]{Proposition}
\theoremstyle{remark}
\theoremstyle{definition}
\newtheorem{definition}[lemma]{Definition}
\newcommand{\N}{\mathbb{N}}
\newcommand{\Z}{\mathbb{Z}}
\newcommand{\M}{\mathcal{M}}
\DeclareMathOperator*{\cartprod}{\Box}
\begin{document}

\title{Transmission of perfect trees and rooted powers of graphs}

	\author{Nicol\'as Cianci}
	\address{Facultad de Ciencias Exactas y Naturales \\ Universidad Nacional de Cuyo\\ Mendoza, Argentina.}
	\email{nicocian@gmail.com}
	\subjclass[2010]{Primary: 05C12, 05C76. Secondary: 05C05, 05C90, 94C15.}
	\keywords{Graph, Distance, Transmission, Status, Network, Internet of Things, Tree, Mesh}
	
	\thanks{Research partially supported by grant M049 of SeCTyP, UNCuyo.}
	
	\begin{abstract}
		We give exact formulas for the transmission (i.e. the sum of all distances between vertices) of perfect trees and rooted powers of (connected finite) graphs. 
	\end{abstract}
	
	\maketitle
\section{Introduction} 
The transmission $\delta\left(G\right)$ of a connected graph $G$ is defined as the sum of all the distances between vertices of $G$. Transmission is a graph invariant that has been studied, for example, in \cite{buckley1990distance, doyle1977mean, entringer1976distance, parhami2013exact, vsoltes1991transmission, yeh1994sum}.

Our main interest in the study of transmission of graphs lies in its application as an indicator of the performance of networks in the context of Internet of Things. Indeed, suppose a network of $n$ devices is modeled by an undirected simple connected graph $G$ with vertices $\{1,\ldots,n\}$, each of which represents a single device of the network, and edges $\{i,j\}$ for each pair of devices $i$ and $j$ that are able to send data packages, or messages, to each other. Assuming a routing protocol that minimizes the total amount of sent messages (or \emph{hop count}) is being used, the expected amount of individual messages sent after some time $T$ under ideal conditions is equal to  
\[S=\sum\limits_{i,j}d\left(i,j\right)p\left(i,j\right)\] where $d\left(i,j\right)$ represents the distance between the vertices $i$ and $j$ and $p\left(i,j\right)$ is the expected amount of messages sent from device $i$ to device $j$ over that time. Now, if $p\left(i,j\right)$ is either unknown or assumed to be independent of $i$ and $j$ for $i\neq j$, then the expected amount of messages sent over time $T$ reduces to
\[
	S=pT\sum\limits_{i,j}d\left(i,j\right)=pT\delta\left(G\right),
\] for some constant $p$ that is independent of the topology of the network. Hence, the transmission of graphs allows us to compare the performance of networks with different topologies when the rate of sent messages between specific devices is unknown or assumed to be equal to some constant $p$ for every pair of different devices.       

In this article we compute the transmission of perfect $n$--ary trees and \emph{rooted powers} of graphs. These results will be used in a future article, which is currently in progress, in which we will compare the performance of different network topologies in the context of Internet of Things.
\section{Preliminaries}
Throughout this article, every graph will be a \emph{rooted finite undirected simple graph}. Namely, a graph $G$ will be a 3--uple $G=\left(V, E, v_0\right)$ where $V$ is a finite non-empty set, $E$ is a set of 2--element subsets of $V$ and $v_0\in V$. 

For a graph $G=\left(V,E,v_0\right)$, we write $V\left(G\right)$, $E\left(G\right)$ and $v_0\left(G\right)$ for $V$, $E$ and $v_0$, respectively.
As usual, the elements of $V$ and $E$ will be called the \emph{vertices} of $G$ and the \emph{edges} of $G$, respectively, and $v_0$ will be called the \emph{root} of $G$.

For $i,j\in V\left(G\right)$ we say that $i$ and $j$ are \emph{adjacent} vertices of $G$, and we write $i\sim_G j$, if $\{i,j\}\in E\left(G\right)$. If $v\in V\left(G\right)$, the \emph{degree} of $v$ is the number of vertices of $G$ that are adjacent to $v$.

The number $|V\left(G\right)|$ of vertices of $G$ will be denoted by $|G|$.

Given two vertices $i$ and $j$ of $G$ and a non-negative integer $n$, a \emph{path (of length $n$)} between $i$ and $j$ is a sequence $x_0,\ldots,x_n$ of vertices of $G$ such that 
\[i=x_0\sim_G x_1\sim_G\cdots\sim_G x_n=j.\]
The distance $d_{G}(i,j)$ between $i$ and $j$ in $G$ is the infimum of the set of non-negative integers $n$ such that there is a path of length $n$ between $i$ and $j$. When the graph $G$ is understood, the distance $d_{G}(i,j)$ will be simply denoted by $d(i,j)$.

We will say that $G$ is \emph{connected} if there is a path between $i$ and $j$ for every pair of vertices $i$ and $j$ of $G$. Equivalently, $G$ is connected if the distance between $i$ and $j$ is finite for every $i,j\in V\left(G\right)$. 

If $G$ is connected and $v\in V\left(G\right)$, the \emph{transmission}\footnote{The transmission of a vertex $v$ in $G$ is also called the \emph{status} of $v$. See \cite{buckley1990distance}.} of $v$ in $G$, which will be denoted by $\delta\left(v,G\right)$, is defined as the sum of the distances between $v$ and every vertex of $G$, that is, \[\delta\left(v,G\right)=\sum\limits_{j\in V\left(G\right)}d\left(v,j\right).\] 
The transmission $\delta\left(v_0\left(G\right),G\right)$ of the root of $G$ in $G$ will be denoted by $\delta_0\left(G\right)$.

The transmission $\delta\left(G\right)$ of $G$ is defined as the sum \[\delta\left(G\right)=\sum\limits_{i\in V\left(G\right)}\delta\left(i,G\right)=\sum\limits_{i,j\in V\left(G\right)}d\left(i,j\right).\]

It is clear that the expressions
\[\frac{\delta\left(G\right)}{|G|^{2}}\quad\text{and}\quad \frac{\delta\left(G\right)}{|G|\left(|G|-1\right)}\]
are the \emph{mean distance between vertices} and the \emph{mean distance between different vertices} of $G$, respectively. Hence, the transmission of graphs can be used to compute other indicators of network performance as well \cite{parhami2013exact}.

\begin{definition}
	Let $G$ and $H$ be two rooted graphs. The \emph{one-point union} $G\lor H$ of $G$ and $H$ is the graph obtained by identifying the roots of $G$ and $H$. Namely, the set of vertices of $G\lor H$ is the wedge sum of the pointed sets $\left(V\left(G\right),v_0\left(G\right)\right)$ and $\left(V\left(H\right),v_0\left(H\right)\right)$ and two vertices $x,y$ of $G\lor H$ are adjacent in $G\lor H$ if and only if 
	\begin{itemize}
		\item there exist representatives of $x$ and $y$ in $G$ that are adjacent in $G$, or
		\item there exist representatives of $x$ and $y$ in $H$ that are adjacent in $H$.
	\end{itemize}
	
	Without loss of generality, we can always assume that $v_0\left(G\right)=v_0=v_0\left(H\right)$ and $V\left(G\right)\cap V\left(H\right)=\{v_0\}$, in which case, $G\lor H$ is just the union of the graphs $G$ and $H$, that is, $G\lor H=\left(V\left(G\right)\cup V\left(H\right),E\left(G\right)\cup E\left(H\right),v_0\right)$. Under this assumption, it is clear that $x$ and $y$ are adjacent in $G\lor H$ if and only if 
	\begin{itemize}
		\item $x,y\in V\left(G\right)$ and $x\sim_G y$, or
		\item $x,y\in V\left(H\right)$ and $x\sim_H y$.
	\end{itemize}
	
	Moreover, any path in $G\lor H$ from a vertex of $H$ to a vertex of $G$ must include the root $v_0$. Hence, it is easy to see that 
	\[d_{G\lor H}(x,y)=\begin{cases}
		d_G(x,y)&\text{if $x,y\in V(G)$,}\\
		d_H(x,y)&\text{if $x,y\in V(H)$,}\\
		d_G\left(x,v_0\right)+d_H\left(v_0,y\right)&\text{if $x\in V(G)$ and $y\in V(H)$,}\\
		d_H\left(x,v_0\right)+d_G\left(v_0,y\right)&\text{if $x\in V(H)$ and $y\in V(G)$.}\\
	\end{cases}
	\]
\end{definition}

The following proposition is easy to obtain.
\begin{prop}\label{wedge}
	Let $G$ and $H$ be two connected rooted graphs. Then 
			\[\delta_0\left(G\lor H\right)=\delta_0\left(G\right)+\delta_0\left(H\right)\] and
		\[\delta\left(G\lor H\right)=\delta\left(G\right)+\delta\left(H\right)+2\left(|H|-1\right)\delta_0\left(G\right)+2\left(|G|-1\right)\delta_0\left(H\right).\]
\end{prop}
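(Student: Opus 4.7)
The plan is to reduce everything to the explicit piecewise distance formula already recorded in the definition of $G\lor H$. Throughout, I would adopt the convention made there, namely $V(G)\cap V(H)=\{v_0\}$, so that $V(G\lor H)=V(G)\cup V(H)$ and $|G\lor H|=|G|+|H|-1$. The only bookkeeping subtlety is avoiding counting $v_0$ twice when splitting sums over $V(G\lor H)$; I would handle this by writing $V(G\lor H)$ as the disjoint union $V(G)\sqcup(V(H)\setminus\{v_0\})$.

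For the first identity, I would simply compute
\[
\delta_0(G\lor H)=\sum_{y\in V(G)}d_{G\lor H}(v_0,y)+\sum_{y\in V(H)\setminus\{v_0\}}d_{G\lor H}(v_0,y).
\]
Since $v_0\in V(G)\cap V(H)$, the piecewise distance formula gives $d_{G\lor H}(v_0,y)=d_G(v_0,y)$ on the first sum and $d_{G\lor H}(v_0,y)=d_H(v_0,y)$ on the second. The first sum is exactly $\delta_0(G)$, and the second sum equals $\delta_0(H)$ because the missing term $d_H(v_0,v_0)$ is zero. This gives $\delta_0(G\lor H)=\delta_0(G)+\delta_0(H)$.

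For the second identity, I would split the double sum $\delta(G\lor H)=\sum_{x,y\in V(G\lor H)}d_{G\lor H}(x,y)$ according to which of the sets $V(G)$ or $V(H)\setminus\{v_0\}$ each of $x,y$ belongs to, and use the symmetry $d(x,y)=d(y,x)$ to combine the two cross terms. The within-$G$ contribution is $\delta(G)$ by the piecewise formula. The within-$(V(H)\setminus\{v_0\})$ contribution equals $\delta(H)-2\delta_0(H)$: indeed, $\delta(H)=\sum_{x,y\in V(H)\setminus\{v_0\}}d_H(x,y)+2\sum_{y\in V(H)}d_H(v_0,y)$, and the last sum is $\delta_0(H)$. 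For the cross term I would compute
\[
2\sum_{x\in V(G)}\sum_{y\in V(H)\setminus\{v_0\}}\bigl(d_G(x,v_0)+d_H(v_0,y)\bigr)=2(|H|-1)\delta_0(G)+2|G|\,\delta_0(H),
\]
using that $|V(H)\setminus\{v_0\}|=|H|-1$ and $|V(G)|=|G|$. Summing the three contributions gives
\[
\delta(G\lor H)=\delta(G)+\delta(H)-2\delta_0(H)+2(|H|-1)\delta_0(G)+2|G|\,\delta_0(H),
\]
and the $-2\delta_0(H)+2|G|\delta_0(H)=2(|G|-1)\delta_0(H)$ combination produces the claimed formula.

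There is no real obstacle here; the whole proof is direct calculation from the distance formula in the definition. The one point where a small error would propagate is the asymmetric way $v_0$ is handled (belonging to $V(G)$ but excluded from the $H$-side of the partition), which is why the final constants are $|H|-1$ and $|G|-1$ rather than both $|H|-1$ or both $|G|-1$; I would double-check by swapping the roles of $G$ and $H$ and verifying that the formula is symmetric in them.
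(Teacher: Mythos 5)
Your proof is correct and follows essentially the same route as the paper: a direct computation splitting the double sum according to membership in $V(G)$ or $V(H)$ and applying the piecewise distance formula. The only difference is bookkeeping — the paper uses the symmetric overlapping split (both in $G$, both in $H$, cross pairs with both vertices $\neq v_0$), while you use the genuine partition $V(G)\sqcup\left(V(H)\setminus\{v_0\}\right)$ and recover the symmetric formula via the correction term $-2\delta_0(H)$ — and both versions of the calculation are sound.
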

\begin{proof}
	We assume that $v_0\left(G\right)=v_0=v_0\left(H\right)$ and that $V\left(G\right)\cap V\left(H\right)=\{v_0\}$.
	
	The first equality is clear. On the other hand, we have that
	\begin{align*}
		\delta\left(G\lor H\right)&=\sum\limits_{g,g'\in G}d\left(g,g'\right)+\sum\limits_{h,h'\in H}d\left(h,h'\right)+2\sum\limits_{\substack{g\in G\\g\neq v_0}}\sum\limits_{\substack{h\in H\\h\neq v_0}}\left(d\left(g,v_0\right)+d\left(v_0,h\right)\right)=\\
		&=\delta\left(G\right)+\delta\left(H\right)+2\sum\limits_{\substack{h\in H\\h\neq v_0}}\sum\limits_{\substack{g\in G\\g\neq v_0}}d\left(g,v_0\right)+2\sum\limits_{\substack{g\in G\\g\neq v_0}}\sum\limits_{\substack{h\in H\\h\neq v_0}}d\left(h,v_0\right)=\\
		&=\delta\left(G\right)+\delta\left(H\right)+2\left(|H|-1\right)\delta_0\left(G\right)+2\left(|G|-1\right)\delta_0\left(H\right).\qedhere
	\end{align*}
\end{proof}

The one-point union of finite rooted graphs is an associative and commutative operation. Moreover, we can recursively define the one-point union of a finite collection $G_1,\ldots,G_n$ of rooted graphs as 
\[\bigvee_{i=1}^{1}G_i=G_1\text{\quad and \quad}\bigvee_{i=1}^{n}G_i=\left(\bigvee_{i=1}^{n-1}G_i\right)\lor G_n.\]

Using proposition \ref{wedge} and an inductive argument we obtain the following more general result.
\begin{prop}\label{wedge_n}
	Let $G_1,\ldots,G_n$ be connected rooted graphs. Then
		\[\delta_0\left(\bigvee\limits_{i=1}^{n} G_i\right)=\sum\limits_{i=1}^{n}\delta_0\left(G_i\right)\] and
	\[\delta\left(\bigvee\limits_{i=1}^{n} G_i\right)=\sum\limits_{i=1}^{n}\delta\left(G_i\right)+2\sum\limits_{i=1}^{n}\delta_0\left(G_i\right)\left(1-n+\sum\limits_{j\neq i}|G_j|\right).\]
\end{prop}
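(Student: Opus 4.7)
The natural plan is induction on $n$, with Proposition \ref{wedge} handling the inductive step via the recursive definition $\bigvee_{i=1}^{n}G_i=\bigl(\bigvee_{i=1}^{n-1}G_i\bigr)\lor G_n$. The base case $n=1$ is immediate, since both sides of each equality reduce to $\delta_0(G_1)$ and $\delta(G_1)$ respectively (the factor $1-n+\sum_{j\neq i}|G_j|$ vanishes when $n=1$). For the inductive step I set $G=\bigvee_{i=1}^{n-1}G_i$ and $H=G_n$ and apply Proposition \ref{wedge} to $G\lor H$, substituting the inductive formulas for $\delta_0(G)$ and $\delta(G)$. A preliminary observation I will record is that the order $|G|$ of an $(n-1)$-fold one-point union equals $\sum_{i=1}^{n-1}|G_i|-(n-2)$, because all $n-1$ copies of the root are identified; equivalently $|G|-1=\sum_{i=1}^{n-1}|G_i|-(n-1)$.

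The first equality is the easy half: by Proposition \ref{wedge} and the inductive hypothesis,
\[
\delta_0\!\left(\bigvee_{i=1}^{n}G_i\right)=\delta_0(G)+\delta_0(G_n)=\sum_{i=1}^{n-1}\delta_0(G_i)+\delta_0(G_n)=\sum_{i=1}^{n}\delta_0(G_i),
\]
so no further work is required there.

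The second equality is where the bookkeeping lives, and this is the step I expect to be the main obstacle. Applying Proposition \ref{wedge} gives
\[
\delta\!\left(\bigvee_{i=1}^{n}G_i\right)=\delta(G)+\delta(G_n)+2(|G_n|-1)\delta_0(G)+2(|G|-1)\delta_0(G_n).
\]
Now I expand $\delta(G)$ and $\delta_0(G)$ via the induction hypothesis and collect the coefficients of each $\delta_0(G_i)$. For $i\leq n-1$, the contribution from the inductive $\delta(G)$ is $2\delta_0(G_i)\bigl(2-n+\sum_{j\neq i,\,j\leq n-1}|G_j|\bigr)$, to which the term $2(|G_n|-1)\delta_0(G)$ adds a further $2(|G_n|-1)\delta_0(G_i)$; summing these two factors yields exactly $2\delta_0(G_i)\bigl(1-n+\sum_{j\neq i,\,j\leq n}|G_j|\bigr)$, matching the claim. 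For $i=n$, the coefficient comes from $2(|G|-1)\delta_0(G_n)$, and substituting the expression $|G|-1=\sum_{i=1}^{n-1}|G_i|-(n-1)=1-n+\sum_{j\neq n}|G_j|$ gives the required coefficient. The only real risk is an off-by-one slip in the vertex-count identity or in the value of $1-n$ versus $2-n$ across the inductive jump, so I will write those out explicitly to verify the telescoping.
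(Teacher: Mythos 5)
Your proof is correct and follows exactly the route the paper intends: the paper gives no written proof, stating only that the result follows from Proposition \ref{wedge} by an inductive argument, which is precisely what you carry out. Your bookkeeping of the coefficients (including the vertex-count identity $|G|-1=\sum_{i=1}^{n-1}|G_i|-(n-1)$ and the shift from $2-n$ to $1-n$) checks out.
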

\begin{definition}
	Let $G$ and $H$ be two rooted graphs. The \emph{rooted product} $G\circ H$ of $G$ and $H$ is the graph with set of vertices $V\left(G\right)\times V\left(H\right)$ and root $\left(v_0\left(G\right),v_0\left(H\right)\right)$, where two vertices $\left(g,h\right)$ and $\left(g',h'\right)$ are adjacent if and only if either
	\begin{itemize}
		\item $h=h'=v_0\left(H\right)$ and $g\sim_G g'$, or
		\item $g=g'$ and $h\sim_H h'$.
	\end{itemize}
	
	It is clear that 
	\[
	d_{G\circ H}\left(\left(g,h\right),\left(g',h'\right)\right)=\left\{\begin{array}{lr}
		d_H\left(h,h'\right)&\text{if $g=g'$,}\\
		d_H\left(h,h_0\right)+d_G\left(g,g'\right)+d_H\left(h_0,h'\right)&\text{if $g\neq g'$,}\end{array}\right.
	\]
	for every $g,g'\in V\left(G\right)$ and $h,h'\in V\left(H\right)$, where $h_0=v_0(H)$.
\end{definition}

The following result can be found in \cite{yeh1994sum}.
\begin{prop}[{\cite[Theorem 5]{yeh1994sum}}]\label{rooted_product}
	Let $G$ and $H$ be connected rooted graphs. Then
	\[\delta\left(G\circ H\right)=|G|\delta\left(H\right)+2|G|\left(|G|-1\right)|H|\delta_0\left(H\right)+|H|^{2}\delta\left(G\right).\]
\end{prop}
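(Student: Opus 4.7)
The plan is to compute $\delta(G\circ H)=\sum_{(g,h),(g',h')}d_{G\circ H}((g,h),(g',h'))$ directly from the definition, splitting the double sum according to whether $g=g'$ or $g\neq g'$ and then invoking the two-case distance formula given in the definition of the rooted product.

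For the diagonal part $g=g'$, for each fixed $g\in V(G)$ we have $d_{G\circ H}((g,h),(g,h'))=d_H(h,h')$, and summing $d_H(h,h')$ over all $(h,h')\in V(H)^2$ produces exactly $\delta(H)$. Summing over the $|G|$ choices of $g$ then contributes the term $|G|\delta(H)$.

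For the off-diagonal part $g\neq g'$, applying the distance formula and expanding linearly decomposes the sum into three pieces:
\[
\sum_{g\neq g'}\sum_{h,h'}d_H(h,h_0)\;+\;\sum_{g\neq g'}\sum_{h,h'}d_G(g,g')\;+\;\sum_{g\neq g'}\sum_{h,h'}d_H(h_0,h').
\]
Here $h_0=v_0(H)$. The first and third inner sums do not depend on one of the variables, so they factor: the inner sum $\sum_{h,h'}d_H(h,h_0)$ equals $|H|\delta_0(H)$ (since $h'$ is free), and then summing over the $|G|(|G|-1)$ ordered pairs $(g,g')$ with $g\neq g'$ yields $|G|(|G|-1)|H|\delta_0(H)$ for each of the two symmetric pieces, contributing $2|G|(|G|-1)|H|\delta_0(H)$ in total. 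The middle piece factors as $|H|^2\sum_{g\neq g'}d_G(g,g')=|H|^2\delta(G)$, since $d_G(g,g)=0$ so the off-diagonal sum equals $\delta(G)$.

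Adding the diagonal and off-diagonal contributions gives precisely
\[
\delta(G\circ H)=|G|\delta(H)+2|G|(|G|-1)|H|\delta_0(H)+|H|^{2}\delta(G).
\]
There is no real obstacle here: the only mildly delicate point is the bookkeeping in the off-diagonal sum, making sure the two symmetric $\delta_0(H)$ terms are both picked up and that the factor $|G|(|G|-1)$ appears with the correct multiplicity from counting ordered pairs $g\neq g'$.
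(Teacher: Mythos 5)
Your proof is correct and follows essentially the same route as the paper: a direct computation splitting the double sum over vertex pairs into the cases $g=g'$ and $g\neq g'$, applying the two-case distance formula, and factoring the resulting sums. The bookkeeping (the two symmetric $\delta_0(H)$ pieces, the $|G|(|G|-1)$ count of ordered pairs, and the observation that $d_G(g,g)=0$ lets the off-diagonal sum over $G$ equal $\delta(G)$) all checks out.
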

\begin{proof}
	Let $h_0$ be the root of $H$.

	We have that
	\begin{align*}
		\delta\left(G\circ H\right)&=\sum\limits_{h,h'}\sum\limits_{g,g'}d\left(\left(g,h\right),\left(g',h'\right)\right)=\\
		&=\sum\limits_{h,h'}\left(\sum\limits_{g}d\left(h,h'\right)+\sum\limits_{g\neq g'}\left(d\left(h,h_0\right)+d\left(h',h_0\right)+d\left(g,g'\right)\right)\right)=\\
		&=\sum\limits_{g}\sum\limits_{h,h'}d\left(h,h'\right)+\sum\limits_{g\neq g'}\sum\limits_{h}\sum\limits_{h'}\left(d\left(h,h_0\right)+d\left(h',h_0\right)\right)+\sum\limits_{h,h'}\sum\limits_{g\neq g'}d\left(g,g'\right)=\\
		&=|G|\delta\left(H\right)+2|G|\left(|G|-1\right)|H|\delta_0\left(H\right)+|H|^{2}\delta\left(G\right)
	\end{align*}
where, in each of the previous sums, $g$, $g'$ range in $V\left(G\right)$ and $h$, $h'$ range in $V\left(H\right)$.
\end{proof}

For the sake of completeness, and since the main goal of this article is to provide theoretic tools that will allow us to compare the performance of networks of different sizes and topologies in the context of Internet of Things, we state some simple results about transmission of well-known families of graphs that are commonly used to model such networks.

The proofs of the following three propositions are straightforward and will be left to the reader.
\begin{prop}
	Let $K_n$ be a complete graph with $n$ vertices. Then 
	\[\delta\left(K_n\right)=n\left(n-1\right).\]
\end{prop}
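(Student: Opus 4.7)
The plan is to observe that in a complete graph every two distinct vertices share an edge, so the distance function is essentially a Kronecker-type indicator, and then simply count ordered pairs.

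More concretely, I would first note that by the definition of $K_n$, for every $i,j\in V(K_n)$ with $i\neq j$ we have $\{i,j\}\in E(K_n)$, so there is a path of length $1$ between $i$ and $j$, giving $d(i,j)\leq 1$. Since $i\neq j$, no path of length $0$ exists, so $d(i,j)=1$. Of course, $d(i,i)=0$ for every $i\in V(K_n)$.

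I would then plug these values into the defining sum
\[
\delta(K_n)=\sum_{i,j\in V(K_n)}d(i,j)=\sum_{i=j}0+\sum_{i\neq j}1.
\]
The first sum vanishes, and the second counts the number of ordered pairs of distinct vertices, which is $n(n-1)$. This yields the claimed formula.

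There is no real obstacle here: the only thing to be slightly careful about is whether the double sum in the definition of $\delta$ ranges over ordered or unordered pairs. The definition given in the preliminaries is an unrestricted sum over $i,j\in V(G)$, i.e.\ ordered pairs including $i=j$, so the count is $n(n-1)$ rather than $\binom{n}{2}$, matching the stated formula.
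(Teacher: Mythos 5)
Your proof is correct; the paper explicitly leaves this proposition to the reader as straightforward, and your argument (all distinct vertices are at distance $1$, so the ordered-pair sum is $n(n-1)$) is exactly the intended one, including the correct observation that the sum in the definition of $\delta$ ranges over ordered pairs.
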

\begin{prop}
	Let $C_n$ be a circular graph with $n$ vertices. Then
	\[\delta\left(C_n\right)=\left\{\begin{array}{lr}
	\frac{n^{3}-n}{4}&\text{if $n$ is odd,}\\
	\frac{n^{3}}{4}&\text{if $n$ is even.}
	\end{array}\right.\]
\end{prop}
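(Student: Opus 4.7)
The plan is to exploit the vertex-transitivity of $C_n$. The cyclic automorphism group acts transitively on $V(C_n)$, so $\delta(v, C_n) = \delta_0(C_n)$ for every vertex $v$, and hence
\[
\delta(C_n) = \sum_{v \in V(C_n)} \delta(v, C_n) = n\, \delta_0(C_n).
\]
This reduces the task to computing $\delta_0(C_n)$, the sum of distances from a fixed vertex to every other vertex of the cycle.

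To compute $\delta_0(C_n)$, I would label the vertices $0, 1, \ldots, n-1$ cyclically with $0$ as the root, and observe (by a quick length argument on shortest paths around the two arcs of the cycle) that $d(0,j) = \min(j, n-j)$. The sum $\delta_0(C_n) = \sum_{j=0}^{n-1} \min(j, n-j)$ then splits naturally into two parity cases.

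When $n = 2k+1$ is odd, each value $1, 2, \ldots, k$ appears exactly twice in the sum (once for index $j$ and once for index $n-j$), giving $\delta_0(C_n) = 2(1+2+\cdots+k) = k(k+1)$. Multiplying by $n$ and re-expressing in terms of $n$ yields $(n^3-n)/4$. When $n = 2k$ is even, the values $1, \ldots, k-1$ still appear twice each, but the antipodal distance $k$ occurs only once, since $j = k$ is fixed by the involution $j \mapsto n-j$. Thus $\delta_0(C_n) = 2(1+\cdots+(k-1)) + k = k^2$, giving $\delta(C_n) = 2k \cdot k^2 = n^3/4$.

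The calculations are pure bookkeeping, so there is no real obstacle; the only subtlety is remembering the single antipodal contribution in the even case, which is precisely what produces the slightly different leading constants in the two branches of the formula.
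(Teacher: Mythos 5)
Your proof is correct: the reduction via vertex-transitivity to $\delta(C_n)=n\,\delta_0(C_n)$, the formula $d(0,j)=\min(j,n-j)$, and the parity split (with the single antipodal term in the even case) all check out and yield the stated values. The paper explicitly leaves this proposition to the reader as straightforward, and your argument is exactly the standard computation it has in mind.
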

\begin{prop}
	Let $S_n$ be the star graph with $n+1$ vertices, that is, $S_n$ is the complete bipartite graph $K_{1,n}$. Then,
	\[\delta\left(S_n\right)=2n^{2}.\]
\end{prop}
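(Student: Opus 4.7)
The plan is to leverage the very simple structure of $S_n = K_{1,n}$: it consists of one center vertex $c$ together with $n$ leaves $\ell_1,\ldots,\ell_n$, each adjacent only to $c$. From this I first read off the full distance table: $d(c,\ell_i)=1$ for each $i$, and $d(\ell_i,\ell_j)=2$ for $i\neq j$, since the only path between two distinct leaves must pass through $c$.

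With the distances in hand, I would simply evaluate the double sum defining $\delta$. Split the pairs $(i,j)\in V(S_n)^2$ into three classes: the diagonal $i=j$ contributes $0$; the $2n$ ordered pairs consisting of the center and a leaf contribute $2n\cdot 1=2n$; and the $n(n-1)$ ordered pairs of distinct leaves contribute $n(n-1)\cdot 2=2n^2-2n$. Summing gives $\delta(S_n)=2n+(2n^2-2n)=2n^2$, as claimed.

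As an alternative I would note that $S_n$ is the one-point union of $n$ copies of $K_2$, rooted at the shared endpoint which becomes the center. Since $|K_2|=2$, $\delta(K_2)=2$ and $\delta_0(K_2)=1$, Proposition \ref{wedge_n} with $G_i=K_2$ for all $i$ yields
\[
\delta(S_n)=n\cdot 2+2\sum_{i=1}^{n}1\cdot\Bigl(1-n+\sum_{j\neq i}2\Bigr)=2n+2n(n-1)=2n^2,
\]
providing an independent check. There is no real obstacle here: both routes are elementary, which is why the paper leaves the proof to the reader. I would present the direct counting version since it keeps the argument self-contained.
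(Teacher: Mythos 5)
Your proof is correct, and since the paper explicitly leaves this proposition to the reader as straightforward, your direct counting of the ordered pairs (center--leaf pairs contributing $2n$, distinct-leaf pairs contributing $2n(n-1)$) is exactly the intended elementary argument. The alternative via Proposition \ref{wedge_n} with $S_n=\bigvee_{i=1}^{n}K_2$ also checks out and is a nice consistency check, but adds nothing essential.
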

\begin{definition}
	Let $n\in\N$. Let $R=\left(R_1,\ldots,R_n\right)\in\N^{n}$, and, for $i=1,\ldots,n$, let $P_i$ be the path graph with $R_i$ vertices $1,2,\ldots,R_i$. We define the mesh graph $\M\left(R\right)$ as the cartesian product 
	\[\M\left(R\right)=\cartprod\limits_{i=1}^{n}P_i.\]
	
	If $x=\left(x_1,\ldots,x_n\right)$ and $y=\left(y_1,\ldots,y_n\right)$ are two vertices of $\M\left(R\right)$ then \[d_{\M(R)}\left(x,y\right)=\sum\limits_{i=1}^{n}|x_i-y_i|.\]
\end{definition}

The following result is already known and can be found in \cite{parhami2013exact}. 
\begin{prop}[{\cite[Section 2]{parhami2013exact}}]\label{prop_mesh}
	Let $n\in\N$ and let $R=\left(R_1,\ldots,R_n\right)\in\N^{n}$. Then 
	\[\delta\left(\M\left(R\right)\right)=\frac{1}{3}\left(\prod\limits_{i=1}^{n}R^{2}_i\right)\sum\limits_{i=1}^{n}\left(R_i-\frac{1}{R_i}\right).\]
\end{prop}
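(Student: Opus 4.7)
The plan is to exploit the $L_1$-distance formula already given for the mesh and swap the order of summation, so that the computation reduces to a one-dimensional sum on a path.

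First I would write
\[
\delta\bigl(\M(R)\bigr)=\sum_{x,y\in V(\M(R))}d(x,y)=\sum_{x,y}\sum_{i=1}^{n}|x_i-y_i|=\sum_{i=1}^{n}\sum_{x,y}|x_i-y_i|,
\]
where $x=(x_1,\ldots,x_n)$ and $y=(y_1,\ldots,y_n)$ range over $V(\M(R))=\prod_j\{1,\ldots,R_j\}$. For each fixed $i$, the summand $|x_i-y_i|$ depends only on the $i$-th coordinates of $x$ and $y$; summing over the remaining $2(n-1)$ coordinates freely gives a factor of $\prod_{j\neq i}R_j^{2}$, so
\[
\sum_{x,y}|x_i-y_i|=\left(\prod_{j\neq i}R_j^{2}\right)\sum_{a,b=1}^{R_i}|a-b|.
\]

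Next I would evaluate the one-dimensional sum. Grouping pairs by their gap $k=|a-b|$, which occurs $2(m-k)$ times for $k=1,\ldots,m-1$, yields
\[
\sum_{a,b=1}^{m}|a-b|=2\sum_{k=1}^{m-1}k(m-k)=\frac{m(m^{2}-1)}{3}=\frac{m^{3}-m}{3}.
\]

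Finally, substituting $m=R_i$ and factoring $\prod_{j}R_j^{2}$ out of the outer sum:
\[
\delta\bigl(\M(R)\bigr)=\sum_{i=1}^{n}\left(\prod_{j\neq i}R_j^{2}\right)\frac{R_i^{3}-R_i}{3}=\frac{1}{3}\left(\prod_{j=1}^{n}R_j^{2}\right)\sum_{i=1}^{n}\frac{R_i^{3}-R_i}{R_i^{2}}=\frac{1}{3}\left(\prod_{i=1}^{n}R_i^{2}\right)\sum_{i=1}^{n}\left(R_i-\frac{1}{R_i}\right),
\]
which is the claimed formula. There is no real obstacle: the argument is just Fubini plus the standard path identity $\delta(P_m)=\tfrac{m^{3}-m}{3}$, and the only step that takes a line of care is the evaluation of $\sum_{a,b=1}^{m}|a-b|$.
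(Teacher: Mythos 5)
Your proof is correct and follows essentially the same route as the paper: interchange the order of summation so that the $L_1$-distance formula reduces the computation to the one-dimensional sum $\sum_{a,b=1}^{m}|a-b|=\frac{m^{3}-m}{3}$, then factor out $\prod_j R_j^2$. The only (immaterial) difference is that you evaluate that sum by counting pairs with a fixed gap, whereas the paper splits the inner sum at $j=i$; both give the same identity.
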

\begin{proof}
	For $k\in\N$ we have that 
	\begin{align*}
		\sum\limits_{i=1}^{k}\sum\limits_{j=1}^{k}|i-j|&=\sum\limits_{i=1}^{k}\left(\sum\limits_{j=1}^{i-1}\left(i-j\right)+\sum\limits_{j=i+1}^{k}\left(j-i\right)\right)=\\
		&=\sum\limits_{i=1}^{k}\left(\sum\limits_{j=1}^{i-1}\left(i-j\right)\right)+\sum\limits_{i=1}^{k}\left(\sum\limits_{j=i+1}^{k}\left(j-i\right)\right)=\\
		&=\sum\limits_{i=1}^{k}\left(\frac{\left(i-1\right)i}{2}\right)+\sum\limits_{i=1}^{k}\left(\frac{\left(i-1\right)i}{2}\right)
		=\sum\limits_{i=1}^{k}\left(i-1\right)i=\frac{k^{3}-k}{3}.
	\end{align*}

	It follows that
	\begin{align*}
			\delta\left(\M\left(R\right)\right)&=\sum\limits_{i_1=1}^{R_1}\cdots\sum\limits_{i_n=1}^{R_n}\sum\limits_{j_1=1}^{R_1}\cdots\sum\limits_{j_n=1}^{R_n}\sum\limits_{t=1}^{n}|i_t-j_t|=\sum\limits_{t=1}^{n}\left(\prod\limits_{s\neq t}R^{2}_s\right)\sum\limits_{i_t=1}^{R_t}\sum\limits_{j_t=1}^{R_t}|i_t-j_t|=\\
			&=\sum\limits_{t=1}^{n}\left(\prod\limits_{s=1}^{n}R^{2}_s\right)\frac{R_t^{3}-R_t}{3R^{2}_t}=\frac{1}{3}\left(\prod\limits_{s=1}^{n}R^{2}_s\right)\sum\limits_{t=1}^{n}\left(R_t-\frac{1}{R_t}\right).
	\end{align*}
\end{proof}
Alternatively, proposition \ref{prop_mesh} can be proved using Theorem 1 of \cite{yeh1994sum}.
\section{Main results}
In this section we show that the transmission of the perfect $n$--ary tree of depth $k$ is 
\[\delta\left(T_n^{k}\right)=\frac{2n^{k+1}}{\left(n-1\right)^{2}}\left(kn^{k+1}+k-2n\frac{n^{k}-1}{n-1}\right)\] for every $n\in \N$ and every $k\in\N_0$, and that the transmission of the $k$--fold rooted product $G^{k}$ of a rooted connected graph $G$ with itself, is
\[\delta\left(G^{k}\right)=n^{k-1}\left(\frac{n^{k}-1}{n-1}\right)\delta\left(G\right)+2n^{k}\left(\left(k-1\right)n^{k-1}-\frac{n^{k-1}-1}{n-1}\right)\delta_0\left(G\right)\] for every $k\in\N$, where $n=|G|$.

\subsection{Transmission of perfect trees}
We define the following simple construction on rooted graphs.
\begin{definition}
	Let $S=(\{0,1\},\{\{0,1\}\},1)$, that is, $S$ is the complete graph with vertices $0$ and $1$ and root $1$, and let $G$ be any rooted graph. For simplicity, we assume that $0,1\not\in V(G)$. We define the rooted graph $\widetilde{G}$ as 
	\[\widetilde{G}=\left(V\left(G\lor S\right),E\left(G\lor S\right),0\right).\]
	
	In other words, the rooted graph $\widetilde{G}$ has the same underlying graph as $G\lor S$ but its root is the vertex $0$ of $S$ instead of the vertex $1$.
\end{definition}

\begin{lemma}\label{widetilde}
	Let $G$ be a rooted connected graph. Then \[\delta_0\left(\widetilde{G}\right)=\delta_0\left(G\right)+|G|\] and \[\delta\left(\widetilde{G}\right)=\delta\left(G\right)+2\delta_0\left(G\right)+2|G|.\]
\end{lemma}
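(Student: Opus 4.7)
The plan is to exploit the observation that the rooted graphs $\widetilde{G}$ and $G\lor S$ have the same underlying graph, so that any invariant that depends only on the graph (not on the choice of root) agrees on the two. In particular, $\delta\bigl(\widetilde{G}\bigr)=\delta(G\lor S)$, whereas the two values $\delta_0\bigl(\widetilde{G}\bigr)$ and $\delta_0(G\lor S)$ differ because the roots differ.

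First I would handle $\delta_0\bigl(\widetilde{G}\bigr)$ by a direct distance computation. The vertex $0$ has degree $1$ in $\widetilde{G}$ and its unique neighbor is the vertex $1$ of $S$, which in the one-point union is identified with $v_0(G)$. Hence every path in $\widetilde{G}$ from $0$ to any $x\in V(G)$ must pass through $v_0$, giving $d_{\widetilde{G}}(0,x)=1+d_G(v_0,x)$. Summing over $x\in V(G)$ (and noting that $d(0,0)=0$) yields
\[
\delta_0\bigl(\widetilde{G}\bigr)=\sum_{x\in V(G)}\bigl(1+d_G(v_0,x)\bigr)=|G|+\delta_0(G),
\]
which is the first formula.

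Second, for $\delta\bigl(\widetilde{G}\bigr)=\delta(G\lor S)$ I would apply Proposition \ref{wedge} with $H=S$. A direct inspection of the two-vertex graph $S$ gives $|S|=2$, $\delta_0(S)=1$ and $\delta(S)=2$. Substituting into the formula of Proposition \ref{wedge},
\[
\delta(G\lor S)=\delta(G)+2+2(|S|-1)\delta_0(G)+2(|G|-1)\delta_0(S)=\delta(G)+2\delta_0(G)+2|G|,
\]
which is the second formula.

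There is no genuine obstacle here: the statement is a bookkeeping corollary of Proposition \ref{wedge} together with the elementary remark that attaching a pendant vertex at the root increases every root-to-vertex distance by exactly $1$. The only point requiring a moment of care is to distinguish $\delta_0\bigl(\widetilde{G}\bigr)$ from $\delta_0(G\lor S)$, which is why I compute the former by hand rather than trying to extract it from Proposition \ref{wedge}.
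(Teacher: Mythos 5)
Your proof is correct and follows essentially the same route as the paper: a direct computation of $d_{\widetilde{G}}(0,x)=d_G(v_0,x)+1$ for the root transmission, and an application of Proposition \ref{wedge} with $H=S$ (using $|S|=2$, $\delta_0(S)=1$, $\delta(S)=2$) for the total transmission. The explicit remark distinguishing $\delta_0\bigl(\widetilde{G}\bigr)$ from $\delta_0(G\lor S)$ is a sensible point of care, though the paper leaves it implicit.
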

\begin{proof}
	Let $g_0=v_0(G)$.
	It is clear that $d_{\widetilde{G}}\left(0,g\right)=d_G\left(g_0,g\right)+1$ for every $g\in V\left(G\right)$. Thus,
	\[\delta_0\left(\widetilde{G}\right)=\sum\limits_{g\in V\left(\widetilde{G}\right)}d_{\widetilde{G}}\left(0,g\right)=\sum\limits_{g\in V\left(G\right)}d_{\widetilde{G}}\left(0,g\right)=\sum\limits_{g\in V\left(G\right)}\left(d_{G}\left(g_0,g\right)+1\right)=\delta_0\left(G\right)+|G|.\]
	
	On the other hand, we have that
	\begin{align*}
	\delta\left(\widetilde{G}\right)&=\delta\left(G\lor S\right)=\delta\left(G\right)+\delta\left(S\right)+2\left(|S|-1\right)\delta_0\left(G\right)+2\left(|G|-1\right)\delta_0\left(S\right)=\\&=\delta\left(G\right)+2\delta_0\left(G\right)+2|G|
\end{align*}
	by \ref{wedge}.
\end{proof}

\begin{definition}
	Let $n\in \N$. For $k\in\N_0$ we recursively define the \emph{perfect $n$-ary tree of depth $k$}, denoted by $T^{k}_n$, as follows.
	\begin{itemize}
		\item $T_n^{0}$ is the only possible graph with one vertex.
		\item For $k\in\N$, we define \[T_n^{k}=\bigvee\limits_{i=1}^{n}\widetilde{T^{k-1}_n}.\]  
	\end{itemize}
	It is easy to see that $|T_n^{k}|=\frac{n^{k+1}-1}{n-1}$ for every $k\in\N_0$.
\end{definition}

Transmission of trees has been studied, for example, in \cite{doyle1977mean,parhami2013exact}. Our next result is an exact formula for the transmission of perfect trees.
	\begin{prop}\label{prop_trees}
		Let $n\in\N$. Then
		\[\delta\left(T_n^{k}\right)=\frac{2n^{k+1}}{\left(n-1\right)^{2}}\left(kn^{k+1}+k-2n\frac{n^{k}-1}{n-1}\right)\] for every $k\in\N_0$.
	\end{prop}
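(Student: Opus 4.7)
The plan is to prove the formula by induction on $k$, using the recursive decomposition $T_n^k = \bigvee_{i=1}^n \widetilde{T_n^{k-1}}$. Since Proposition \ref{wedge_n} and Lemma \ref{widetilde} require knowing $\delta_0$ of the pieces, I would first establish in parallel a closed form for the root transmission $\delta_0(T_n^k)$ and then feed it into the induction for $\delta(T_n^k)$.

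Applying Proposition \ref{wedge_n} to $T_n^k$ as a wedge of $n$ equal copies of $\widetilde{T_n^{k-1}}$, together with Lemma \ref{widetilde}, yields
\[\delta_0(T_n^k) \;=\; n\,\delta_0(\widetilde{T_n^{k-1}}) \;=\; n\bigl(\delta_0(T_n^{k-1}) + |T_n^{k-1}|\bigr).\]
Since $|T_n^{k-1}| = (n^k-1)/(n-1)$, the recurrence $\delta_0(T_n^k) - n\delta_0(T_n^{k-1}) = n(n^k-1)/(n-1)$, with $\delta_0(T_n^0)=0$, unrolls by a routine geometric-sum computation to the closed form $\delta_0(T_n^k) = \tfrac{k n^{k+1}}{n-1} - \tfrac{n(n^k-1)}{(n-1)^2}$ (for $n\geq 2$).

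For $\delta$, applying Proposition \ref{wedge_n} to the same decomposition gives
\[\delta(T_n^k) \;=\; n\,\delta(\widetilde{T_n^{k-1}}) + 2n\bigl(1 - n + (n-1)\,|\widetilde{T_n^{k-1}}|\bigr)\,\delta_0(\widetilde{T_n^{k-1}}),\]
into which I would substitute the Lemma \ref{widetilde} expressions for $\delta(\widetilde{T_n^{k-1}})$ and $\delta_0(\widetilde{T_n^{k-1}})$, and the explicit value of $|\widetilde{T_n^{k-1}}| = |T_n^{k-1}|+1$. Using the closed form for $\delta_0(T_n^{k-1})$ from the previous step, this becomes a recurrence of the form $\delta(T_n^k) = n\,\delta(T_n^{k-1}) + R(n,k)$ with $R(n,k)$ explicit as a rational function of $n$ with $n^k$ and $k$ appearing as parameters.

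The inductive verification then reduces to checking that the claimed closed form satisfies this recurrence; the base case $k=0$ is immediate since both sides vanish. The main obstacle is purely algebraic: after plugging the formula for $\delta(T_n^{k-1})$ into $n\,\delta(T_n^{k-1}) + R(n,k)$ one obtains a rational expression with several competing terms in $k n^{2k+2}$, $n^{2k+1}$, and lower powers. The cleanest route is to clear the common denominator $(n-1)^3$ throughout and verify the resulting polynomial identity in $n$, treating $k$ and $n^k$ as independent formal parameters. The edge case $n=1$, where the claimed formula is of the form $0/0$, is either recovered by continuity (giving $((k+1)^3-(k+1))/3$, the transmission of the path $P_{k+1}$ consistent with Proposition \ref{prop_mesh}) or handled by a short direct computation.
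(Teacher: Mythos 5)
Your proposal is correct and follows essentially the same route as the paper's own proof: use Lemma \ref{widetilde} and Proposition \ref{wedge_n} to obtain the recurrence $\delta_0(T_n^k)=n\bigl(\delta_0(T_n^{k-1})+|T_n^{k-1}|\bigr)$, solve it in closed form, feed that into the wedge formula to get $\delta(T_n^k)=n\,\delta(T_n^{k-1})+R(n,k)$ (note $1-n+(n-1)|\widetilde{T_n^{k-1}}|$ simplifies to $n^k-1$, exactly the coefficient the paper uses), and verify the claimed formula by induction, with the algebra left as a routine check just as the paper does. Your explicit treatment of the degenerate case $n=1$, where the stated formula is of the form $0/0$ but the limit recovers the path transmission $\bigl((k+1)^3-(k+1)\bigr)/3$, is a point the paper passes over in silence.
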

	\begin{proof}
		By \ref{widetilde} \[\delta_0\left(\widetilde{T_n^{k}}\right)=\delta_0\left(T_n^{k}\right)+\frac{n^{k+1}-1}{n-1}\] for $k\in\N_0$. 
		By \ref{wedge_n}, it follows that 
		\[\delta_0\left(T_n^{k+1}\right)=n\delta_0\left(\widetilde{T_n^{k}}\right)=n\delta_0\left(T_n^{k}\right)+n\frac{n^{k+1}-1}{n-1}\] for every $k\in\N_0$.
		Since $\delta_0\left(T_n^{0}\right)=0$, the reader can verify by induction on $k$ that 
		\[\delta_0\left(T_n^{k}\right)=\frac{kn^{k+2}-\left(k+1\right)n^{k+1}+n}{\left(n-1\right)^{2}}\] and that
		\[\delta_0\left(\widetilde{T_n^{k}}\right)=\frac{\left(k+1\right)n^{k+2}-\left(k+2\right)n^{k+1}+1}{\left(n-1\right)^{2}}\] for every $k\in\N_0$.
				
		By \ref{wedge_n} we obtain that
		\[\delta\left(T_n^{k+1}\right)=\delta\left(\bigvee\limits_{i=1}^{n}\widetilde{T_n^{k}}\right)=n\delta\left(\widetilde{T_n^{k}}\right)+2n\delta_0\left(\widetilde{T_n^{k}}\right)\left(n^{k+1}-1\right)
		\] for every $k\in\N_0$.
		On the other hand, by \ref{widetilde} it follows that
		\[\delta\left(\widetilde{T_n^{k}}\right)=\delta\left(T_n^{k}\right)+2\frac{\left(k+1\right)n^{k+2}-\left(k+2\right)n^{k+1}+1}{\left(n-1\right)^{2}}\] and hence
		\[\delta\left(T_n^{k+1}\right)=n\delta\left(T_n^{k}\right)+2n^{k+2}\frac{\left(k+1\right)n^{k+2}-\left(k+2\right)n^{k+1}+1}{\left(n-1\right)^{2}}\] for every $k\in\N_0$.
		
		Again, the reader can verify by induction on $k$ that \[\delta\left(T_n^{k}\right)=\frac{2n^{k+1}}{\left(n-1\right)^{2}}\left(kn^{k+1}+k-2n\frac{n^{k}-1}{n-1}\right)\] for every $k\in\N_0$, as claimed.
	\end{proof}
	
	From the last proposition one obtains that the transmission of a perfect binary tree of depth $k$ is given by
		\[\delta\left(T_2^{k}\right)=2^{k+2}\left(\left(k-2\right)2^{k+1}+k+4\right).\] This result was previously obtained in \cite{parhami2013exact}.
	
	Next, we give a generating function for the sequence $\{\delta\left(T_n^{k}\right):k\in\N\}$ for every $n\geq 2$.
	\begin{prop}
		Let $n\in \N$, $n\geq 2$.
		Then, the sequence $\{\delta\left(T_n^{k+1}\right):k\in\N_0\}$ is generated by the function $g$ defined by 
		\[g\left(x\right)=\frac{2n^{2}}{\left(1-nx\right)^{2}\left(1-n^{2}x\right)^{2}}.\]
	\end{prop}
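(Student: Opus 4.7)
The plan is to convert the generating-function claim into the verification of a first-order recurrence that has already been established inside the proof of Proposition~\ref{prop_trees}. There, setting
\[c_k := \frac{2n^{k+2}\bigl((k+1)n^{k+2}-(k+2)n^{k+1}+1\bigr)}{(n-1)^2},\]
one obtained $\delta(T_n^{k+1}) = n\,\delta(T_n^{k}) + c_k$ for every $k\in\N_0$, with $\delta(T_n^{0})=0$. Writing $a_k := \delta(T_n^{k+1})$, this is $a_k = n a_{k-1} + c_k$ for $k\geq 1$, and the recurrence at $k=0$ gives $a_0 = c_0 = 2n^2$.

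Multiplying the recurrence by $x^k$, summing over $k\geq 1$, and using $a_0 = c_0$ to cancel the boundary terms would yield
\[(1-nx)\, g(x) = C(x), \qquad C(x) := \sum_{k=0}^{\infty} c_k\, x^k,\]
so the task reduces to computing $C(x)$ in closed form. Splitting $c_k$ as a combination of $(k+1)n^{2k+4}$, $(k+2)n^{2k+3}$, and $n^{k+2}$ and applying the standard identities $\sum_{k\geq 0}(k+1)y^k = (1-y)^{-2}$, $\sum_{k\geq 0}(k+2)y^k = (2-y)(1-y)^{-2}$, and $\sum_{k\geq 0}y^k = (1-y)^{-1}$ with $y=nx$ or $y=n^2x$, I would get
\[C(x) = \frac{2}{(n-1)^2}\left(\frac{n^3(n-2+n^2x)}{(1-n^2x)^2} + \frac{n^2}{1-nx}\right).\]

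Combining these two fractions over the common denominator $(1-nx)(1-n^2x)^2$, the numerator becomes
\[n^3(n-2+n^2x)(1-nx) + n^2(1-n^2x)^2,\]
which a short expansion shows equals $n^2(n-1)^2$. Hence $C(x) = \frac{2n^2}{(1-nx)(1-n^2x)^2}$, and dividing by $1-nx$ gives the asserted formula for $g$. The main obstacle is precisely this polynomial cancellation of the factor $(n-1)^2$; conceptually it is forced, since every coefficient of $g$ is an integer for every $n\geq 2$ and no $n-1$ can persist in the denominator, but one still has to carry out the expansion to confirm that the quartic in $x$ collapses to a constant.
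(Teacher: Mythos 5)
Your argument is correct, but it takes a different route from the paper's. The paper starts from the closed-form expression of Proposition \ref{prop_trees}, rewrites $\delta\bigl(T_n^{k+1}\bigr)$ as $2n^{k+2}\,(u_n\ast u_n)(k+2)$ for the sequence $u_n(j)=\frac{n^j-1}{n-1}$, observes that $\{u_n(k+1)\}$ is generated by $\frac{1}{(1-x)(1-nx)}$, and obtains the squared denominators automatically from the product rule for convolutions, finishing with the substitution $x\mapsto nx$; no cancellation is required, because the factor $(n-1)^2$ never enters the generating-function computation. You instead work from the first-order recurrence $\delta\bigl(T_n^{k+1}\bigr)=n\,\delta\bigl(T_n^{k}\bigr)+c_k$ established inside the proof of Proposition \ref{prop_trees}, translate it into $(1-nx)g(x)=C(x)$ (your boundary check $a_0=c_0=2n^2$ is right, since $T_n^1$ is the star $S_n$), and then sum $C(x)$ termwise; I verified that the numerator $n^3(n-2+n^2x)(1-nx)+n^2(1-n^2x)^2$ does collapse to $n^2(n-1)^2$, so your computation goes through. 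The trade-off is that your method needs only the recurrence, not the final closed form, and is the standard mechanical way to pass from a linear recurrence to a generating function; the paper's convolution trick is shorter and explains structurally why the answer is a perfect square of a rational function, whereas in your version that structure emerges only after the somewhat miraculous cancellation of $(n-1)^2$, which, as you note, is forced by integrality but still has to be checked by hand.
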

	\begin{proof}
		Let $u_n$ be the (bilateral) sequence defined by 
		\[u_n\left(j\right)=\left\{
			\begin{array}{ll}
				\frac{n^{j}-1}{n-1}&\text{if $j\in\N$,}\\
				0&\text{if $j\in\Z_{\leq 0}$.}
			\end{array}
		  \right.\]
		  By \ref{prop_trees},
		\begin{align*}
		\delta\left(T_n^{k+1}\right)&=\frac{2n^{k+2}}{\left(n-1\right)^{2}}\sum\limits_{j=1}^{k+1}\left(n^{j}-1\right)\left(n^{k+2-j}-1\right)=\\&=2n^{k+2}\sum\limits_{j=-\infty}^{\infty}u_n\left(j\right)u_n\left(k+2-j\right)=2n^{k+2}\left(u_n\ast u_n\right)\left(k+2\right)=\\
		&=2n^2n^k\left(u_n\ast u_n\right)\left(k+2\right)
		\end{align*}
		for every $k\in\N_0$, where, as usual, $u_n\ast u_n$ denotes the convolution of $u_n$ with itself.
				
	Note that, since the sequences $\{1:k\in\N_0\}$ and $\{n^{k}:k\in\N_0\}$ are generated by the functions defined by \[\frac{1}{1-x} \quad\text{ and }\quad\frac{1}{1-nx},\] respectively, then the sequence $\{u_n\left(k+1\right):k\in\N_0\}$ is generated by the function defined by
	\[\frac{n}{n-1}\left(\frac{1}{1-nx}\right)-\frac{1}{n-1}\left(\frac{1}{1-x}\right)=\frac{1}{\left(1-x\right)\left(1-nx\right)}.\]
	Thus, the sequence $\{\left(u_n\ast u_n\right)\left(k+2\right):k\in\N_0\}$ is generated by the function defined by
	\[\frac{1}{\left(1-x\right)^{2}\left(1-nx\right)^{2}}.\]
	By means of the substitution $x\mapsto nx$, one obtains that the sequence $\{\delta\left(T_n^{k+1}\right):k\in\N_0\}$ is generated by the function $g$ as claimed. 
	\end{proof}
	\subsection{Transmission of rooted powers of graphs}
	In this subsection, we define the rooted powers $G^{k}$ of a rooted graph $G$ and show that the transmission of $G^{k}$ can be expressed in terms of $k$, $|G|$, $\delta\left(G\right)$ and $\delta_0\left(G\right)$ for every connected rooted graph $G$. 
	\begin{definition}
	Let $G$ be a rooted graph and let $k\in \N$. We define the \emph{rooted $k$--th power of $G$}, which will be denoted as $G^{k}$, as the $k$--fold rooted product of $G$ with itself, that is, $G^{1}=G$ and $G^{k+1}=G^{k}\circ G$ for every $k\in\N$.
\end{definition}
\begin{definition}\label{def_polys}
	For $k\in\N$ we define the following polynomials in the variable $n$:
	\begin{itemize}
		\item $a_k\left(n\right)=n^{k-1}\frac{n^{k}-1}{n-1}$, and
		\item $b_k\left(n\right)=2\left(k-1\right)n^{2k-1}-2n^{k}\frac{n^{k-1}-1}{n-1}$.
	\end{itemize}
\end{definition}
\begin{lemma}\label{lemma_polys}
	The polynomials $a_k$ and $b_k$ defined in \ref{def_polys} can be recursively defined by:
	\begin{itemize}
		\item $a_1\left(n\right)=1$ and $a_{k+1}\left(n\right)=n^{k}+n^{2}a_k\left(n\right)$ for $k\in \N$, and
		\item $b_1\left(n\right)=0$ and $b_{k+1}\left(n\right)=2n^{k+1}\left(n^{k}-1\right)+n^{2}b_k\left(n\right)$ for $k\in\N$,
	\end{itemize}
	respectively.
\end{lemma}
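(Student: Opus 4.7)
The plan is to verify both recurrences by direct algebraic manipulation of the closed forms in Definition \ref{def_polys}; no induction is actually required, since each claim reduces to a polynomial identity in $n$ and $k$. First I would dispatch the base cases by evaluating at $k=1$: the factor $\frac{n^{k}-1}{n-1}$ reduces to $1$ in $a_1$, while $\frac{n^{k-1}-1}{n-1}$ vanishes at $k=1$ in $b_1$, giving $a_1(n)=1$ and $b_1(n)=0$ as required.

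For the $a_k$ recurrence, the idea is to expand
\[
n^{k} + n^{2}\,a_k(n) = n^{k} + n^{k+1}\cdot\frac{n^{k}-1}{n-1},
\]
put both terms over the common denominator $n-1$, and observe that the numerator telescopes as $n^{k}(n-1)+n^{k+1}(n^{k}-1) = n^{2k+1}-n^{k}$, which factors as $n^{k}(n^{k+1}-1)$. Dividing by $n-1$ yields precisely $a_{k+1}(n)=n^{k}\cdot\frac{n^{k+1}-1}{n-1}$.

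For the $b_k$ recurrence the strategy is the same but with more terms to track. I would compute
\[
2n^{k+1}(n^{k}-1) + n^{2}\,b_k(n) = 2(k-1)n^{2k+1} + 2n^{k+1}(n^{k}-1) - \frac{2n^{k+2}(n^{k-1}-1)}{n-1},
\]
collect the polynomial pieces coming from $2n^{k+1}(n^{k}-1) = 2n^{2k+1}-2n^{k+1}$ to get $2k\,n^{2k+1}-2n^{k+1}$, and then fold the stray $-2n^{k+1}$ into the fraction by means of the key identity
\[
n^{k+1}(n-1) + n^{k+2}(n^{k-1}-1) = n^{k+1}(n^{k}-1),
\]
which I would verify in one line by distributing. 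This turns the expression into $2k\,n^{2k+1} - 2n^{k+1}\cdot\frac{n^{k}-1}{n-1}$, matching the definition of $b_{k+1}(n)$.

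The only obstacle is careful bookkeeping of exponents and signs in the $b_k$ step, and the calculation is driven entirely by the identity above; once it is isolated, the rest of the manipulation is a short telescoping.
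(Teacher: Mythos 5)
Your proposal is correct: both base cases and both recurrence identities check out (in particular the key identity $n^{k+1}(n-1)+n^{k+2}(n^{k-1}-1)=n^{k+1}(n^{k}-1)$ is exactly what is needed for the $b_k$ step). The paper's proof is the one-line remark that the result ``follows easily by induction on $k$,'' and your direct verification of the polynomial identities is precisely the content of that induction, so the approach is essentially the same.
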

\begin{proof}
	The result follows easily by induction on $k$.
\end{proof}

\begin{prop}
	Let $G$ be a connected rooted graph, let $n=|G|$ and let $k\in \N$. Then 
	\[\delta\left(G^{k}\right)=a_k\left(n\right)\delta\left(G\right)+b_k\left(n\right)\delta_0\left(G\right).\]
\end{prop}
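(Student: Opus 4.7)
The plan is to proceed by induction on $k$, using Proposition \ref{rooted_product} together with the recursive characterisations of $a_k$ and $b_k$ given in Lemma \ref{lemma_polys}.

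For the base case $k=1$, we have $G^{1}=G$, so the claim reduces to $\delta(G) = a_1(n)\delta(G) + b_1(n)\delta_0(G) = 1\cdot\delta(G) + 0\cdot\delta_0(G)$, which holds trivially.

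For the inductive step, assume $\delta(G^{k}) = a_k(n)\delta(G) + b_k(n)\delta_0(G)$. Since $G^{k+1} = G^{k}\circ G$ and $|G^{k}| = n^{k}$ (from iterating $|G\circ H| = |G||H|$), Proposition \ref{rooted_product} applied with $G^{k}$ in the role of $G$ and $G$ in the role of $H$ gives
\[
\delta(G^{k+1}) = n^{k}\delta(G) + 2n^{k+1}(n^{k}-1)\delta_0(G) + n^{2}\delta(G^{k}).
\]
Substituting the inductive hypothesis for $\delta(G^{k})$ and grouping coefficients of $\delta(G)$ and $\delta_0(G)$ yields
\[
\delta(G^{k+1}) = \bigl(n^{k} + n^{2}a_k(n)\bigr)\delta(G) + \bigl(2n^{k+1}(n^{k}-1) + n^{2}b_k(n)\bigr)\delta_0(G).
\]
By Lemma \ref{lemma_polys}, the two parenthesised expressions are exactly $a_{k+1}(n)$ and $b_{k+1}(n)$, completing the induction.

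There is no genuine obstacle here: the entire content of the argument has been front-loaded into Proposition \ref{rooted_product} (which unwinds one level of the rooted product) and Lemma \ref{lemma_polys} (which identifies the resulting recursion with the closed forms of $a_k$ and $b_k$). The only thing worth being careful about is getting the roles of the two factors in $G\circ H$ right when applying \ref{rooted_product}, since the formula is not symmetric in $G$ and $H$.
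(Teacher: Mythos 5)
Your proof is correct and follows essentially the same route as the paper's: induction on $k$, unwinding $G^{k+1}=G^{k}\circ G$ via Proposition \ref{rooted_product} and matching the resulting recursion against Lemma \ref{lemma_polys}. The application of \ref{rooted_product} with $G^{k}$ in the first slot and $G$ in the second is exactly as in the paper, so there is nothing to add.
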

\begin{proof}
	By \ref{rooted_product} it is clear that 
	\[\delta\left(G^{k+1}\right)=n^{k}\delta\left(G\right)+2n^{k+1}\left(n^{k}-1\right)\delta_0\left(G\right)+n^{2}\delta\left(G^{k}\right).\]
 
	 This means that if $\delta\left(G^{k}\right)=x\delta\left(G\right)+y\delta_0\left(G\right)$ then
	 \[\delta\left(G^{k+1}\right)=\left(n^{k}+n^{2}x\right)\delta\left(G\right)+\left(2n^{k+1}\left(n^{k}-1\right)+n^{2}y\right)\delta_0\left(G\right).\]
	Since $\delta\left(G\right)=a_1\left(n\right)\delta\left(G\right)+b_1\left(n\right)\delta_0\left(G\right)$, the result follows from \ref{lemma_polys} by an inductive argument.
\end{proof}
\bibliographystyle{acm}
\bibliography{references}
\end{document}